\newenvironment{proof}{\noindent {\bf Proof:}}{\hfill $\Box$}
\newtheorem{theorem}{Theorem}[section]
\newtheorem{definition}{Definition}[section]
\newtheorem{assumption}{Assumption}[section]
\def\bx{\mathbf{x}}
\def\br{\mathbf{r}}
\def\balpha{\mathbf{\alpha}}
\def\by{\mathbf{y}}
\def\bz{\mathbf{z}}
\def\R{\mathbb{R}}
\def\N{\mathbb{N}}
\def\K{\mathbf{K}}
\def\M{\mathbf{M}}
\def\z{\mathbf{z}}
\def\bx{\mathbf{x}}
\def\y{\mathbf{y}}
\begin{document}

\title{Minimizing the sum of many rational functions\thanks{The first author acknowledges
support by the French National Research Agency (ANR) through COSINUS program (project ID4CS ANR-09-COSI-005).
The second author acknowledges support by
Research Program MSM6840770038 of the Czech Ministry of Education and Project 103/10/0628 of
the Grant Agency of the Czech Republic.}}

\author{Florian Bugarin,$^{1,2}$ Didier Henrion,$^{2,3}$ Jean-Bernard Lasserre$^{2,4}$}

\footnotetext[1]{Universit\'e de Toulouse; Mines Albi;
Institut Cl\'ement Ader (ICA); Campus Jarlard, F-81013 Albi, France}

\footnotetext[2]{CNRS; LAAS; 7 avenue du colonel Roche, F-31077 Toulouse,
France; Universit\'e de Toulouse; UPS, INSA, INP, ISAE; LAAS;
F-31077 Toulouse, France}

\footnotetext[3]{Faculty of Electrical Engineering, Czech Technical
University in Prague, Technick\'a 4, CZ-16607 Prague, Czech Republic}

\footnotetext[4]{Institut de Math\'ematiques de Toulouse (IMT),
Universit\'e de Toulouse, France}

\date{\today}

\maketitle

\begin{abstract}
We consider the problem of globally minimizing the sum of many rational functions
over a given compact semialgebraic set. The number of terms can be large (10 to 100),
the degree of each term should be small (up to 10), and the number of variables can be large
(10 to 100) provided some kind of sparsity is present. We describe a formulation of the
rational optimization problem
as a generalized moment problem and its hierarchy of convex semidefinite relaxations. Under
some conditions we prove that the sequence of optimal values converges
to the globally optimal value. We show how public-domain software can be
used to model and solve such problems.
\end{abstract}

\begin{center}\footnotesize
{\bf Keywords:} 
rational optimization; global optimization; semidefinite relaxations; sparsity.\\
AMS MSC 2010:
46N10, 65K05, 90C22, 90C26.
\end{center}

\section{Introduction}

Consider the optimization problem
\begin{equation}
\label{defpb}
f^*\,:=\,\displaystyle\inf_{\bx\in\K}\:\sum_{i=1}^N f_i(\bx)
\end{equation}
over the basic semi-algebraic set
\begin{equation}
\label{setk}
\K\,:=\,\{\bx\in\R^n\::\: g_j(\bx)\,\geq\,0,\:j=1,\ldots,m\:\},
\end{equation}
for given polynomials $g_j\in\R[\bx]$, $j=1,\ldots,m$,
and where each term $f_i:\R^n\to\R$ is a rational function
\[
\bx\mapsto f_i(\bx) := \frac{p_i(\bx)}{q_i(\bx)},
\]
with $p_i,q_i\in\R[\bx]$ and $q_i>0$ on $\K$,
for each $i=1,\ldots,N$.

Problem (\ref{defpb}) is a {\it fractional programming} problem of a rather general form. Nevertheless,
we assume that the degree of each $f_i$ and $g_j$ is relatively small (up to 10),
but the number of terms $N$ can be quite large (10 to 100). For dense data
the number of variables $n$ should also be small (up to 10). However,
this number can be also quite large (10 to 100) provided that the problem data feature
some kind of sparsity (to be specified later). Even though problem (\ref{defpb})
is of self-interest, our initial motivation came from some applications in computer vision,
where such problems are typical. These applications will be described elsewhere.

In such a situation, fractional programming problem (\ref{defpb}) is quite challenging.
Indeed,  we make {\it no} assumption on the polynomials $p_i,q_i$ whereas
even with a relatively small number of fractions and under convexity (resp. concavity) assumptions on 
$p_i$ (resp. $q_i$), problem (\ref{defpb})
is hard to solve (especially if one wants to compute the global minimum);
see for example the survey \cite{schaible}
and references therein.

We are interested in solving problem (\ref{defpb}) globally, in the sense
that we do not content ourselves with a local optimum satisfying first order
optimality conditions, as typically obtained with standard local optimization
algorithms such as Newton's method or its variants. If problem (\ref{defpb})
is too difficult to solve globally (because of ill-conditioning and/or
too large a number of variables or terms in the objective function), we
would like to have at least a valid lower bound on the global minimum,
since upper bounds can be obtained with local optimization algorithms.

One possible approach is to reduce all fractions $p_i/q_i$ 
to same denominator and obtain a single
rational fraction to minimize. Then one may try to apply the hierarchy
of semidefinite programming (SDP) relaxations defined in \cite{deklerk},
see also \cite[Section 5.8]{book}. But such a strategy is not appropriate
because the degree of the common denominator is potentially large and even if $n$ is small,
one may not even implement the first relaxation of the hierarchy, due to the
present limitations of SDP solvers. Moreover, in general this strategy also destroys
potential sparsity patterns present in the original formulation (\ref{defpb}),
and so precludes from using an appropriate version (for the rational fraction case) 
of the sparse semidefinite relaxations introduced in \cite{waki} whose convergence
was proved in \cite{sparse} under some conditions on the sparsity pattern, 
see also \cite[Sections 4.6 and 5.3.4]{book}.

Another possibility is to introduce additional variables $r_i$
(that we may call liftings) with associated constraints
\[\frac{p_i(\bx)}{q_i(\bx)} \leq r_i, \:i=1,\ldots,N,\]
and solve the equivalent problem:
\begin{equation}
\label{defpb1}
f^*\,:=\,\displaystyle\inf_{(\bx,\br)\in\widehat{\K}}\:\sum_{i=1}^N r_i
\end{equation}
which is now a {\it polynomial} optimization problem in the new variables
$(\bx,\br)\in\R^n\times\R^N$, and where the new feasible set $\widehat{\K}
= \K \times \{ (\bx,\br) \in\R^{n+N}  \: :\: r_i q_i(\bx) - p_i(\bx) \geq 0 \}$
is modeling the epigraphs of the rational terms. The sparsity pattern is preserved
and if $\K$ is compact one may in general obtain upper and lower bounds 
$\overline{r}_i,\underline{r}_i$ on the $r_i$ so as
to make $\widehat{\K}$ compact by adding the quadratic (redundant) constraints
$(r_i-\underline{r}_i)(\overline{r}_i-r_i)\geq0$, $i=1,\ldots,N$, 
and apply the sparse semidefinite relaxations. However, in doing so one introduces
$N$ additional variables, and this may have an impact on the overall
performance, especially if $N$ is large. In the sequel this approach is referred to
as the {\it epigraph approach}.

The goal of the present paper is to circumvent all above difficulties
in the following two situations: either $n$ is relatively small, or
$n$ is potentially large but some sparsity is present, i.e.,
each $f_i$ and each $g_j$ in (\ref{defpb}) is concerned with only a small subset
of variables. In the approach that we propose, we do not need
the epigraph liftings.
The idea is to formulate (\ref{defpb}) as an equivalent infinite-dimensional linear
problem which a particular instance of the generalized moment problem (GMP)
as defined in \cite{gmp}, with $N$ unknown measures (where each measure is associated with 
a fraction $p_i/q_i$).
In turn this problem can be easily modeled and solved
with our public-domain software GloptiPoly 3 \cite{g3}, a significant update
of GloptiPoly 2 \cite{g2}. In the sequel this approach
is referred to as the {\it GMP approach}.

The outline of the paper is as follows. In Section \ref{dense} we introduce the
SDP relaxations first in the case that $n$ is small and the data are dense polynomials.
Then in Section \ref{sparse} we extend the SDP relaxations to the case that $n$ is
large but sparsity is present. In Section \ref{gmp} we show how the
GMP formulation can be exploited to model the SDP relaxations
of problem (\ref{defpb}) easily with GloptiPoly 3. We also provide
a collection of numerical experiments showing the relevance of our GMP approach,
especially in comparison with the epigraph approach.

\section{Dense SDP relaxations}\label{dense}

In this section we assume that $n$, the number of variables in problem (\ref{defpb}),
is small, say up to 10.

\subsection{GMP formulation}

Consider the infinite dimensional linear problem
\begin{equation}
\label{lp0}
\begin{array}{rl}
\hat{f}\,:=\,\displaystyle\inf_{\mu_i\in\mathcal{M}(\K)}&
\displaystyle\sum_{i=1}^N\int_{\K}p_i\,d\mu_i\\
\mbox{s.t.}
&\displaystyle\int_{\K}q_1d\mu_1=1\\
&\\
&\displaystyle\int_{\K}\bx^{\balpha}q_i d\mu_i=
\displaystyle\int_{\K}\bx^{\balpha}q_1 d\mu_1,
\quad \forall\balpha\in\N^n,\: i=2,\ldots,N,
\end{array}
\end{equation}
where $\mathcal{M}(\K)$ is the space of finite Borel measures
supported on $\K$.
\begin{theorem}
\label{th1}
Let $\K\subset\R^n$ in (\ref{setk}) be compact, and assume that $q_i>0$ on $\K$, $i=1,\ldots,N$.
Then $\hat{f}=f^*$.
\end{theorem}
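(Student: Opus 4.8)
The plan is to show the two inequalities $\hat f\le f^*$ and $\hat f\ge f^*$ separately.

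For $\hat f\le f^*$, I would exhibit a feasible family of measures whose cost equals $f^*$ up to an arbitrarily small error, or, more cleanly, show that $f^*$ is attainable as a limit and construct near-optimal measures. Given any $\bx\in\K$, consider the scaled Dirac measures $\mu_i:=q_i(\bx)^{-1}\,\delta_{\bx}$ for $i=1,\ldots,N$. Since $q_i>0$ on $\K$ these are well-defined nonnegative measures in $\mathcal M(\K)$. Then $\int_\K q_1\,d\mu_1 = q_1(\bx)/q_1(\bx)=1$, and for every $\balpha$ and every $i$ we get $\int_\K \bx^{\balpha}q_i\,d\mu_i = \bx^{\balpha} = \int_\K\bx^{\balpha}q_1\,d\mu_1$, so the family is feasible for (\ref{lp0}); its cost is $\sum_i \int_\K p_i\,d\mu_i = \sum_i p_i(\bx)/q_i(\bx)=\sum_i f_i(\bx)$. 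Taking the infimum over $\bx\in\K$ gives $\hat f\le f^*$.

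For the reverse inequality $\hat f\ge f^*$, take any feasible family $(\mu_i)_{i=1}^N$ for (\ref{lp0}). The moment constraints with all $\balpha$ say that the measure $q_i\,d\mu_i$ has exactly the same moments as the probability-like measure $q_1\,d\mu_1$ (which is a probability measure by the normalization constraint). Since $\K$ is compact, its moments determine a measure uniquely (polynomials are dense in $C(\K)$ by Stone--Weierstrass, so equality of all moments forces equality of the measures). Hence $q_i\,d\mu_i = q_1\,d\mu_1=:\nu$ for all $i$, where $\nu$ is a probability measure on $\K$. Consequently $d\mu_i = q_i^{-1}\,d\nu$, and the objective becomes $\sum_i\int_\K p_i\,d\mu_i = \sum_i\int_\K \frac{p_i}{q_i}\,d\nu = \int_\K\big(\sum_i f_i\big)\,d\nu \ge \int_\K f^*\,d\nu = f^*$, using $\nu(\K)=1$ and $\sum_i f_i\ge f^*$ on $\K$. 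Taking the infimum over feasible families yields $\hat f\ge f^*$, and combining the two bounds gives $\hat f=f^*$.

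The main obstacle is the measure-determinacy step: justifying that equality of all polynomial moments on the compact set $\K$ forces equality of the two finite Borel measures $q_i\,d\mu_i$ and $q_1\,d\mu_1$. This is standard on a compact set — it follows from density of $\R[\bx]$ in $C(\K)$ together with the Riesz representation theorem — but it is the only nontrivial ingredient; one must also check that $q_i\,d\mu_i$ is genuinely a well-defined finite signed (in fact nonnegative, since $q_i>0$) Borel measure, which is immediate from compactness of $\K$ and continuity of $q_i$. Everything else is a direct verification.
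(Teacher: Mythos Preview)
Your proof is correct and follows essentially the same approach as the paper: construct scaled Dirac measures $\mu_i=q_i(\bx)^{-1}\delta_{\bx}$ for one direction, and for the other use moment determinacy on the compact set $\K$ to identify all the measures $q_i\,d\mu_i$ with a single probability measure $\nu$ and then integrate $f\ge f^*$ against $\nu$. The only cosmetic difference is that the paper picks a global minimizer $\bx^*$ (which exists by compactness) rather than taking the infimum over all $\bx\in\K$, and it states moment determinacy without your Stone--Weierstrass/Riesz justification.
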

\begin{proof}
We first prove that $f^*\geq\hat{f}$. As $f=\sum_i p_i/q_i$ is continuous on $\K$,
there exists a global minimizer $\bx^*\in\K$ with $f(\bx^*)=f^*$. 
Define $\mu_i:=q_i(\bx^*)^{-1}\delta_{\bx^*}$, $i=1,\ldots,N$, where
$\delta_{\bx^*}$ is the Dirac measure at $\bx^*$. Then obviously,
the measures $(\mu_i)$, $i=1,\ldots,N$, are feasible for (\ref{lp0}) with associated value
\[\sum_{i=1}^N \int_\K p_i d\mu_i\,=\,\sum_{i=1}^N
p_i(\bx^*)/q_i(\bx^*)=f(\bx^*)=f^*.\] 

Conversely, let $(\mu_i)$ be a feasible solution of (\ref{lp0}).
For every $i=1,\ldots,N$, let $d\nu_i$ be the measure $q_id\mu_i$, i.e.
\[\nu_i(B):=\int_{\K\cap B} q_i(\bx)d\mu_i(\bx)\]
for all sets $B$ in the Borel $\sigma$-algebra of $\R^n$, and so the support of $\nu_i$ is $\K$.
As measures on compact sets are moment determinate, the moments constraints of
(\ref{lp0}) imply that $\nu_i=\nu_1$, for every $i=2,\ldots,N$,
and from $\int_\K q_1d\mu_1=1$ we also deduce that $\nu_1$ is a probability measure on $\K$. 
But then
\begin{eqnarray*}
\sum_{i=1}^N\int_\K p_i d\mu_i&=&\sum_{i=1}^N\int_\K \frac{p_i}{q_i}q_i d\mu_i
=\sum_{i=1}^N\int_\K \frac{p_i}{q_i}d\nu_1\\
&=&\int_\K\left(\sum_{i=1}^N \frac{p_i}{q_i}\right)\,d\nu_1\,=\,
\int_\K fd\nu_1\geq \int_\K f^*d\nu_1\,=\,f^*,
\end{eqnarray*}
where we have used that $f\geq f^*$ on $\K$ and $\nu_1$ is a probability measure on $\K$.
\end{proof}

We next make the following assumption meaning that set $\K$ admits an
algebraic certificate of compactness.
\begin{assumption}
\label{ass1}
The set $\K\subset\R^n$ in (\ref{setk}) is compact and the quadratic polynomial
$\bx\mapsto M-\Vert\bx\Vert^2$ can be written as
\[M-\Vert\bx\Vert^2=\sigma_0+\sum_{j=1}^m\sigma_j\,g_j,\]
for some polynomials $\sigma_j\in\R[\bx]$, all sums of squares of polynomials.
\end{assumption}

\subsection{A hierarchy of dense SDP relaxations}

Let $\y_i=(y_{i\balpha})$ be a real sequence indexed in the canonical basis 
$(\bx^{\balpha})$ of $\R[\bx]$, $i=1,\ldots,N$, and for every $k\in\N$,  let
$\N^n_k:=\{\alpha\in\N^n:\sum_j\alpha_j\leq k\}$.

Define the {\it moment} matrix $M_k(\y_i)$ of order $k$, associated with $\y$, whose entries
indexed by multi-indices $\beta$ (rows) and $\gamma$ (columns) read
\[
[M_k(\y_i)]_{\beta,\gamma} := y_{i(\beta+\gamma)}, \quad \forall\,\beta,\,\gamma\in\N^n_k,
\]
and so are linear in $\y_i$. Similarly, given a polynomial $g(\bx)=\sum_{\alpha} g_{\alpha} \bx^{\alpha}$,
define the {\it localising} matrix $M_k(g\y_i)$ of order $k$, associated with $\y$ and $g$, whose entries read
\[
[M_k(g\,\y_i)]_{\beta,\gamma} := \sum_{\alpha} g_{\alpha} y_{i(\alpha+\beta+\gamma)},
\quad\forall\,\beta,\gamma\in\N^n_k.
\]
In particular, matrix $M_0(g\y_i)$ is identical to $L_{\y_i}(g)$ where for every $i$,
$L_{\y_i}:\R[\bx]\to\R$ is the linear functional defined by:
\[
g\:\mapsto\:L_{\y_i}(g)\,:=\,\sum_{\alpha\in\N^n}g_\alpha y_{i\alpha},\qquad\forall g\in\R[\bx].
\]
Let $u_i:=\lceil ({\rm deg}\,q_i)/2\rceil$, $i=1,\ldots,N$,
$r_j:=\lceil ({\rm deg}\,g_j)/2\rceil$, $j=1,\ldots,m$,
and with no loss of generality assume that $u_1\leq u_2\leq \ldots\leq u_N$.
Consider the hierarchy of semidefinite programming (SDP) relaxations:
\begin{equation}
\label{hierarchy1}
\begin{array}{rll}
f^*_k=\displaystyle\inf_{\y_i} &\displaystyle\sum_{i=1}^N L_{\y_i}(p_i)&\\
\mbox{s.t.}& M_k(\y_i)\succeq0,&i=1,\ldots,N\\
&M_{k-r_j}(g_j\y_i)\succeq0,& i=1,\ldots,N,\:j=1,\ldots,m\\
&L_{\y_1}(q_1)\,=\,1\\
&L_{\y_i}(\bx^{\balpha}q_i)=L_{\y_1}(\bx^{\balpha}q_1),&\forall\balpha\in
\N^n_{2(k-u_i)},\, i=2,\ldots,N.
\end{array}\end{equation}

\begin{theorem}\label{th2}
Let Assumption \ref{ass1} hold and
consider the hierarchy of SDP relaxations (\ref{hierarchy1}). Then it follows that
\begin{enumerate}
\item[(a)] $f^*_k\uparrow f^*$ as $k\to\infty$. 
\item[(b)] Moreover, if $(\y_i^k)$ is an optimal solution of (\ref{hierarchy1}), 
%\[\z^i_{\balpha}:=L_{\y_1}(\bx^{\balpha}q_1),\qquad \forall\balpha\in\N^n_{2(i-u_s)}.\]
and if
\[{\rm rank}\,M_k(\y_i^k)\,=\,{\rm rank}\,M_{k-u_i}(\y_i^k)\,=:\,R,\quad i=1,\ldots,N\]
then $f^*_k=f^*$ and one may extract $R$ global minimizers.
\end{enumerate}
\end{theorem}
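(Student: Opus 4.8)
The plan is to prove part (a) by combining Theorem~\ref{th1} (which identifies $\hat f$ with $f^*$) with the standard convergence theory of Lasserre's moment-SOS hierarchy, and to prove part (b) by invoking the Curto--Fialkow flat extension theorem to each of the $N$ moment sequences separately. I will treat (a) and (b) in turn.

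For part (a), first observe that each SDP relaxation (\ref{hierarchy1}) is a relaxation of the GMP (\ref{lp0}): given feasible measures $(\mu_i)$ for (\ref{lp0}), their moment sequences $\y_i$ up to order $2k$ satisfy $M_k(\y_i)\succeq 0$ and $M_{k-r_j}(g_j\y_i)\succeq 0$ (since $\mu_i$ is supported on $\K$), and they satisfy the linear moment constraints because those are just the finitely many constraints of (\ref{lp0}) truncated to degree $2(k-u_i)$; hence $f^*_k\le \hat f=f^*$ for all $k$, and clearly $f^*_k$ is nondecreasing in $k$. The real work is the reverse inequality $\liminf_k f^*_k\ge f^*$. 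Here I would use Assumption~\ref{ass1}: fix an optimal (or near-optimal) sequence $(\y_i^k)$ for relaxation $k$. The constraint $L_{\y_1}(q_1)=1$ together with the moment-matching constraints and the Putinar-type certificate $M-\|\bx\|^2=\sigma_0+\sum_j\sigma_j g_j$ give uniform bounds on the entries of each $M_k(\y_i^k)$ — this is the classical argument (see \cite[Section~4.3]{book}) showing $|y^k_{i\balpha}|$ is bounded uniformly in $k$ for each fixed $\balpha$. A diagonal extraction then yields, along a subsequence, pointwise limits $y^\infty_{i\balpha}$ for all $\balpha$ and all $i$; by a standard argument these limit sequences have all localising matrices $M_k(g_j\y_i^\infty)\succeq 0$ for all $k$, so by Putinar's theorem each $\y_i^\infty$ is the moment sequence of a finite Borel measure $\mu_i^\infty$ supported on $\K$. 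Passing to the limit in the (continuously many, but each individually finite-degree) moment-matching constraints and in $L_{\y_1}(q_1)=1$ shows $(\mu_i^\infty)$ is feasible for (\ref{lp0}), and passing to the limit in the objective gives $\lim_k f^*_k=\sum_i\int_\K p_i\,d\mu_i^\infty\ge \hat f=f^*$. Combined with $f^*_k\le f^*$ this yields $f^*_k\uparrow f^*$.

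For part (b), suppose $(\y_i^k)$ is optimal for (\ref{hierarchy1}) with $\mathrm{rank}\,M_k(\y_i^k)=\mathrm{rank}\,M_{k-u_i}(\y_i^k)=R$ for every $i$. The rank/flatness condition is exactly the hypothesis of the Curto--Fialkow flat extension theorem (see \cite[Theorem~3.11]{book}), so for each $i$ the truncated sequence $\y_i^k$ is the moment sequence of an $R$-atomic measure $\mu_i=\sum_{\ell=1}^R \lambda_{i\ell}\,\delta_{\bx(i,\ell)}$ with $\lambda_{i\ell}>0$, and moreover the atoms lie in $\K$ because the localising-matrix conditions $M_{k-r_j}(g_j\y_i^k)\succeq 0$ force $g_j(\bx(i,\ell))\ge 0$. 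Thus $(\mu_i)$ is feasible for (\ref{lp0}): the moment-matching constraints hold for all $\balpha\in\N^n_{2(k-u_i)}$ by construction, and because the flat extension is finitely atomic these in fact propagate to all $\balpha\in\N^n$, while $L_{\y_1}(q_1)=1$ gives $\int_\K q_1\,d\mu_1=1$. Feasibility for (\ref{lp0}) together with $f^*_k\le f^*$ and $\sum_i\int_\K p_i\,d\mu_i=\sum_i L_{\y_i^k}(p_i)=f^*_k$ forces $f^*_k=f^*$. Finally, running the proof of Theorem~\ref{th1} in reverse, the common measure $\nu_1=q_1\,d\mu_1$ is a probability measure supported on the finite atom set, and $\int_\K f\,d\nu_1=f^*$ with $f\ge f^*$ on $\K$ forces $f$ to equal $f^*$ at every atom of $\nu_1$; these $R$ atoms are therefore global minimizers, which one extracts numerically from $M_k(\y_1^k)$ by the standard linear-algebra procedure of \cite{extract}.

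The main obstacle is the boundedness/compactness step inside part (a): one must show that the otherwise unbounded cone of pseudo-moment sequences satisfying the SDP constraints is, on each fixed coordinate, uniformly bounded so that a limit can be extracted. This is where Assumption~\ref{ass1} is essential and where some care is needed, since there are $N$ coupled sequences rather than one; however, the normalization $L_{\y_1}(q_1)=1$ caps the mass of $\nu_1=q_1\mu_1$, the matching constraints transfer this to all the $\nu_i=q_i\mu_i$, and the hypothesis $q_i>0$ on the compact $\K$ then bounds the mass of each $\mu_i$ itself — after which the classical single-measure estimates apply termwise. Everything else is a routine application of Putinar's Positivstellensatz, the Curto--Fialkow flat extension theorem, and the weak-$*$ compactness arguments already used in \cite[Chapters~3--4]{book}.
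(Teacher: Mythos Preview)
Your proposal is correct and follows essentially the same approach as the paper: for (a) you extract a pointwise-convergent subsequence of near-optimal pseudo-moment sequences, invoke Putinar's theorem to obtain representing measures on $\K$, and then appeal to the GMP characterization (Theorem~\ref{th1}) to close the loop, while for (b) you apply the Curto--Fialkow flat extension theorem to each $\y_i^k$. The only cosmetic differences are that you are more explicit than the paper about the uniform boundedness step (which the paper dismisses as ``classical'') and that you invoke Theorem~\ref{th1} directly rather than re-deriving the inequality $\sum_i\int_\K p_i\,d\mu_i\ge f^*$ inline.
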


\begin{proof}
The proof of (a) is classical. One first prove that if $(\y_i^k)$ is a nearly optimal solution
of (\ref{hierarchy1}), i.e.
\[f^*_k\leq \sum_{i=1}^N L_{\y_i^k}(p_i)\leq f^*_k+\frac{1}{k},\]
then there exists a subsequence $(k_\ell)$ and a sequence $\y_i$, $i=1,\ldots,N$, such that
\[\lim_{\ell\to\infty}\,y^{k_\ell}_{i\balpha}\,=\,y_{i\balpha},\quad \forall\balpha\in\N^n, i=1,\ldots,N.\]
From this pointwise convergence it easily follows that for every $i=1,\ldots,N$ and $j=1,\ldots,m$,
\[M_k(\y_i)\succeq0,\:M_{k}(g_j\y_i)\succeq0,\quad k=0,1,\ldots\]
By Putinar's theorem \cite[Theorem 2.14]{book} this implies that the sequence
$\y_i$ has a representing measure supported on $\K$, i.e., there exists a finite Borel measure $\mu_i$ on $\K$ such that
\[L_{\y_i}(f)\,=\,\int_\K fd\mu_i,\qquad \forall\, f\in\R[\bx].\]
Moreover, still by pointwise convergence,
\begin{equation}
\label{equalmoments}
L_{\y_i}(q_i\bx^{\balpha})\,=\,\int_\K\bx^{\balpha}q_i(\bx)d\mu_i\,=\,
L_{\y_1}(q_1\bx^{\balpha})\,=\,\int_\K\bx^{\balpha}q_1(\bx)d\mu_1,\quad
\forall\balpha\in\N^n.
\end{equation}
Therefore,
let $d\nu_i:=q_i(\bx)d\mu_i$ which is a probability measure supported on $\K$.
As $\K$ is compact, by (\ref{equalmoments}), $\nu_i=\nu_1$ for every $i=1,\ldots,N$.
Finally, again by pointwise convergence:
\begin{eqnarray*}
f^*\geq\lim_{\ell\to\infty}f^*_{k_\ell}&=&
\lim_{\ell\to\infty}\sum_{i=1}^NL_{\y^{k_\ell}_i}(p_i)
=\sum_{i=1}^N\int_\K p_i d\mu_i\\
&=&\sum_{i=1}^N\int_\K\frac{p_i}{q_i}q_i d\mu_i
=\sum_{i=1}^N\int_\K\frac{p_i}{q_i}d\nu_1\\
&=&\int_\K\left(\sum_{i=1}^N\frac{p_i}{q_i}\right)d\nu_1\geq f^*
\end{eqnarray*}
which proves (a) because $f^*_k$ is monotone non-decreasing. In addition,
$\nu_1$ is an optimal solution of (\ref{lp0}) with optimal value $f^*=\hat{f}$.

Statement (b) follows from the flat extension theorem of Curto and Fialkow
\cite[Theorem 3.7]{book} and each $\y_i$ has an atomic representing measure
supported on $R$ points of $\K$.
\end{proof}

\section{Sparse SDP relaxations}\label{sparse}

In this section we assume that $n$, the number of variables in problem (\ref{defpb}),
is large, say from 10 to 100, and moreover that some sparsity pattern is present
in the polynomial data.

\subsection{GMP formulation}

Let $I_0:=\{1,\ldots,n\}=\cup_{i=1}^NI_i$ with possible overlaps, and 
let $\R[x_k:k\in I_i]$ denote the ring of polynomials in the variables $x_k,\:k\in I_i$.
Denote by $n_i$ the cardinality of $I_i$.

One will assume that $\K\subset\R^n$ in (\ref{setk})  is compact, and one knows
some $M>0$ such that $\bx\in\K\Rightarrow M-\Vert\bx\Vert^2\geq0$.
For every $i\leq N$, introduce the quadratic polynomial $\bx\mapsto g_{m+i}(\bx)=M-\sum_{k\in I_i}x_k^2$.
The index set $\{1,\ldots,m+N\}$ has a partition $\cup_{i=1}^NJ_i$ with $J_i\neq\emptyset$ for every $i=1,\ldots,N$.
In the sequel we assume that for every $i=1,\ldots,N$, $p_i,q_i\in\R[x_k:k\in I_i]$ and
for every $j\in J_i$, $g_j\in\R[x_k:k\in I_i]$.
Next, for every $i=1,\ldots,N$, let
\[\K_i\,:=\,\{\bz\in\R^{n_i}\::\: g_k(\z)\geq0,\:k\in J_i\}\]
so that $\K$ in (\ref{setk}) has the equivalent characterization
\[\K\,=\,\{\bx\in\R^n\::\: (x_k, k\in I_i)\in \K_i,\:i=1,\ldots,N\}.\]
Similarly, for every $i,j\in\{1,\ldots,N\}$ such that
$i\neq j$ and $I_i\cap I_j\neq\emptyset$,
\[\K_{ij}\,=\,\K_{ji}\,:=\,\{(x_k,\,k\in I_i\cap I_j)\::\:(x_k,\,k\in I_i)\in \K_i;\:
(x_k,\,k\in I_j)\in \K_j\:\}.\]

Let $\mathcal{M}(\K)$ be the space of finite Borel measures on
$\K$, and for every $i=1,\ldots,N$, let 
$\pi_i:\def\by{\mathbf{y}}\mathcal{M}(\K)\to
\mathcal{M}(\K_i)$ denote the projection on $\K_i$, that is,
for every $\mu\in\mathcal{M}(\K)$:
\[\pi_i\mu(B)\,:=\,\mu(\{\bx\::\:\bx\in\K;\:(x_k,k\in I_i)\in B\}),\quad\forall B\in
\mathcal{B}(\K_i)\]
where $\mathcal{B}(\K_i)$ is the usual Borel $\sigma$-algebra 
associated with $\K_i$. 

For every $i,j\in\{1,\ldots,N\}$ such that
$i\neq j$ and $I_i\cap I_j\neq\emptyset$, the projection 
$\pi_{ij}:\mathcal{M}(\K_i)\to\mathcal{M}(\K_{ij})$ is also defined in 
an obvious similar manner. For every $i=1,\ldots,N-1$ define the set:
\[U_i\,:=\,\{\,j\in \{i+1,\ldots,N\}\,:\: I_i\cap I_j\neq\emptyset\,\},\]
and consider the infinite dimensional problem
\begin{equation}
\label{lp1}
\begin{array}{rl}
\hat{f}\,:=\,\displaystyle\inf_{\mu_i\in\mathcal{M}(\K_i)}&
\displaystyle\sum_{i=1}^N\int_{\K_i}p_i\,d\mu_i\\
\mbox{s.t.}
&\displaystyle\int_{\K_i}q_i d\mu_i=1,\quad i=1,\ldots,N\\
&\\
&\pi_{ij}(q_i d\mu_i)\,=\,\pi_{ji}(q_jd\mu_j),
\quad \forall j\in U_i,\,i=1,\ldots,N-1.
\end{array}
\end{equation}
\begin{definition}\label{rip}
Sparsity pattern $(I_i)_{i=1}^N$ satisfies the running intersection property if for every $i=2,\ldots,N$:
\[I_i\,\bigcap\,\left(\bigcup_{k=1}^{i-1}I_k\right)\,\subseteq\,I_j,\quad\mbox{ for some }j\leq i-1.\]
\end{definition}

\begin{theorem}
Let $\K\subset\R^n$ in (\ref{setk}) be compact. If the sparsity pattern
$(I_i)_{i=1}^N$ satisfies the running intersection property then
$\hat{f}=f^*$.
\end{theorem}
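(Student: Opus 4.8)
The plan is to mimic the proof of Theorem~\ref{th1}, now taking care of the sparsity structure. The inequality $f^*\geq\hat{f}$ is immediate and essentially identical to the dense case: given a global minimizer $\bx^*\in\K$ of $f$ (which exists by continuity of $f$ on the compact set $\K$), set $\mu_i:=q_i(\bx^*)^{-1}\delta_{(x^*_k,\,k\in I_i)}$ on $\K_i$ for $i=1,\ldots,N$. Each $\mu_i$ is supported on $\K_i$, the normalization constraint $\int_{\K_i}q_i\,d\mu_i=1$ holds trivially, and the marginal constraints $\pi_{ij}(q_i\,d\mu_i)=\pi_{ji}(q_j\,d\mu_j)$ hold because both sides equal the Dirac measure at $(x^*_k,\,k\in I_i\cap I_j)$. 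The associated cost is $\sum_i p_i(\bx^*)/q_i(\bx^*)=f(\bx^*)=f^*$, so $\hat{f}\leq f^*$.

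For the reverse inequality $\hat{f}\geq f^*$, let $(\mu_i)$ be feasible for (\ref{lp1}) and define $\nu_i:=q_i\,d\mu_i\in\mathcal{M}(\K_i)$; since $q_i>0$ on $\K_i$ the support of $\nu_i$ is $\K_i$, the normalization gives that each $\nu_i$ is a probability measure, and the marginal constraints say exactly that $\pi_{ij}\nu_i=\pi_{ji}\nu_j$ whenever $I_i\cap I_j\neq\emptyset$. The cost rewrites as $\sum_i\int_{\K_i}p_i\,d\mu_i=\sum_i\int_{\K_i}(p_i/q_i)\,d\nu_i$. The crucial step is then to glue the consistent family $(\nu_i)$ into a single probability measure $\nu$ on $\K$ whose $I_i$-marginal is $\nu_i$; for this I would invoke the standard measure-gluing lemma that holds precisely under the running intersection property (see \cite{sparse} or \cite[Lemma~4.2 / Section~4.6]{book}). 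Granting such a $\nu$, and noting that each $p_i/q_i$ depends only on the variables $x_k,\,k\in I_i$, we get $\int_{\K_i}(p_i/q_i)\,d\nu_i=\int_\K (p_i/q_i)\,d\nu$, hence
\[
\sum_{i=1}^N\int_{\K_i}p_i\,d\mu_i=\int_\K\Big(\sum_{i=1}^N\frac{p_i}{q_i}\Big)\,d\nu=\int_\K f\,d\nu\geq\int_\K f^*\,d\nu=f^*,
\]
using $f\geq f^*$ on $\K$ and that $\nu$ is a probability measure. Taking the infimum over feasible $(\mu_i)$ yields $\hat{f}\geq f^*$, and combined with the first part this gives $\hat{f}=f^*$.

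The main obstacle is the gluing step: consistency of pairwise marginals does \emph{not} in general guarantee the existence of a joint measure with the prescribed marginals, and this is exactly where the running intersection property is indispensable. I would either cite the known result directly or, if a self-contained argument is wanted, prove it by induction on $N$: order the blocks so that $I_N\cap\big(\bigcup_{k<N}I_k\big)\subseteq I_j$ for some $j<N$, glue $\nu_1,\ldots,\nu_{N-1}$ into a measure $\tilde\nu$ on $\bigcup_{k<N}I_k$ by the inductive hypothesis, then build the product-type extension of $\tilde\nu$ and $\nu_N$ over their common marginal on $I_j$ via disintegration (a regular conditional probability, which exists since the underlying spaces are compact metric). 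One should also double-check the mild bookkeeping that the sparsity hypotheses on the $g_j$ and on the extra ball constraints $g_{m+i}$ make $\K=\{\bx:(x_k,k\in I_i)\in\K_i,\ i=1,\ldots,N\}$ hold exactly, so that the glued $\nu$ is indeed supported on $\K$; this is already recorded in the excerpt preceding the statement.
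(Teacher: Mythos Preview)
Your proof is correct and follows essentially the same route as the paper: the easy direction via Dirac measures at a minimizer, and the hard direction by passing to the probability measures $\nu_i=q_i\,d\mu_i$, observing that the marginal constraints become $\pi_{ij}\nu_i=\pi_{ji}\nu_j$, and invoking the measure-gluing lemma under the running intersection property (the paper cites it as \cite[Lemma~B.13]{book}) to obtain a joint $\nu$ on $\K$. Your additional remarks on a self-contained inductive proof of the gluing lemma and on the bookkeeping for the support of $\nu$ are welcome elaborations but do not change the underlying strategy.
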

\begin{proof}
That $\hat{f}\leq f^*$ is straightforward. As $\K$ is compact and $q_i>0$ on $\K$
for every $i=1,\ldots,N$, $f^*=\sum_{i=1}^N f_i(\bx^*)$ for some $\bx\in\K$.
So let $\mu$ be the Dirac measure $\delta_{\bx^*}$ at $\bx^*$ and let
$\nu_i$ be the projection $\pi_i\mu$ of $\mu$ on $\K_i$.
That is $\nu_i=\delta_{(x^*_k,k\in I_i)}$, the Dirac measure at 
the point $(x^*_k,k\in I_i)$ of $\K_i$. Next, for every $i=1,\ldots,N$,
define the measure $d\mu_i:=q_i(\bx^*)^{-1}d\nu_i$. Obviously,
$(\mu_i)$ is a feasible solution of (\ref{lp1}) because
$\mu_i\in\mathcal{M}(\K_i)$ and $\int q_i d\mu_i=1$, for every $i=1,\ldots,N$, and one also has:
\[(x^*_k,\,k\in I_i\cap I_j)\,=\,\pi_{ij}\mu_i=\pi_{ji}\mu_j,\quad\forall j\neq i\:\mbox{such that }I_j\cap I_i\neq\emptyset.\]
Finally, its value satisfies
\[\sum_{i=1}^N\int_{\K_i}p_i d\mu_i\,=\,
\sum_{i=1}^N p_i(\bx^*)/q_i(\bx^*)\,=\,f^*,\]
and so $\hat{f}\leq f^*$.

We next prove the converse inequality $\hat{f}\geq f^*$. 
Let $(\mu_i)$ be an arbitrary feasible solution of 
(\ref{lp1}), and for every $i=1,\ldots,N$, denote by $\nu_i$ the probability measure
on $\K_i$ with density $q_i$ with respect to $\mu_i$, that is,
\[\nu_i(B)\,:=\,\int_{\K_i\cap B}q_i(\bx)\,d\mu_i(\bx),\qquad\forall B\in\mathcal{B}(\K_i).\]
By definition of the linear program (\ref{lp1}), 
$\pi_{ij}\nu_i=\pi_{ji}\nu_j$ for every couple 
$j\neq i$ such that $I_j\cap I_i\neq\emptyset$.
Therefore, by \cite[Lemma B.13]{book} there exists a probability measure $\nu$
on $\K$ such that $\pi_i\nu=\nu_i$ for every $i=1,\ldots,N$.	
But then 
\begin{eqnarray*}
\displaystyle\sum_{i=1}^N\int_{\K_i}p_i\,d\mu_i=
\displaystyle\sum_{i=1}^N\int_{\K_i}\frac{p_i}{q_i}\,d\nu_i
&=&\displaystyle\sum_{i=1}^N\int_{\K_i}\frac{p_i}{q_i}\,d\nu\\
&=&\displaystyle\int_\K\left(\sum_{i=1}^N\frac{p_i}{q_i}\right)\,d\nu\geq f^*
\end{eqnarray*}
and so $\hat{f}\geq f^*$.
\end{proof}

\subsection{A hierarchy of sparse SDP relaxations}

Let $\y=(y_{\balpha})$ be a real sequence indexed in the canonical basis 
$(\bx^{\balpha})$ of $\R[\bx]$. Define the linear functional 
$L_\y:\R[\bx]\to\R$, by:
\[f\:\left(=\sum_{\alpha\in\N^n}f_\alpha\bx^\alpha\right)\:\mapsto\:
\sum_{\alpha\in\N^n}f_\alpha y_\alpha,\qquad\forall f\in\R[\bx].\]
For every $i=1,\ldots,N$, let 
\[\N^{(i)}\,:=\,\{\:\balpha\in\N^n\::\:\alpha_k=0\mbox{ if }k\not\in I_i\:\};
\quad\N^{(i)}_k\,:=\,\{\:\balpha\in\N^{(i)}\::\:\sum_i\alpha_i\leq k\:\}.\]
An obvious similar definition of $\N^{(ij)}$ ($=\N^{(ji)}$)
and $\N^{(ij)}_k$ ($=\N^{(ji)}_k$)
applies when considering $I_j\cap I_i\neq\emptyset$.

Let $\y=(y_{\balpha})$ be a given sequence indexed in the canonical basis of $\R[\bx]$. 
For every $i=1,\ldots,N$, the sparse moment matrix $M_k(\y,I_i)$ associated with $y$, 
has its rows and columns indexed in the canonical basis $(\bx^{\balpha})$ of $\R[x_k\,:\,k\in I_i]$,
and with entries:
\[M_k(\y,I_i)_{\alpha,\beta}=L_\y(\bx^{\alpha+\beta})=y_{\alpha+\beta},\qquad\forall\,\alpha,\beta\in\N^{(i)}_k.\]
Similarly, for a given polynomial $h\in\R[x_k\,:\,k\in I_i]$, the sparse localizing matrix $M_k(h\, \y,\,I_i)$ associated with $\y$ and $h$, has
its rows and columns indexed in the canonical basis $(\bx^{\balpha})$ of $\R[x_k\,:\,k\in I_i]$, and with entries:
\[M_k(h\,\y,I_i)_{\alpha,\beta}=L_\y(h\,\bx^{\alpha+\beta})=
\sum_{\gamma\in\N^{(i)}}h_\gamma y_{\alpha+\beta+\gamma},\qquad\forall\,\alpha,\beta\in\N^{(i)}_k.\]

With $\K\subset\R^n$ defined in (\ref{setk}), let $r_j:=\lceil ({\rm deg}g_j)/2\rceil$, for every $j=1,\ldots,m+N$.
Consider the hierarchy of semidefinite relaxations:
\begin{equation}
\label{primal-sparse}
\begin{array}{rll}
f^*_k=\displaystyle\inf_\y &\displaystyle\sum_{i=1}^N L_\y(p_i)&\\
\mbox{s.t.}& M_k(\y,I_i)\succeq0,&i=1,\ldots,N\\
&M_{k-r_j}(g_j\y,I_i)\succeq0,&\forall j\in J_i,\:i=1,\ldots,N\\
&L_\y(q_i)\,=\,1,&i=1,\ldots,N\\
&L_\y(\bx^{\balpha}q_i)=L_\y(\bx^{\balpha}q_j)=0,&\forall\alpha\in
\N^{(ij)},\,\forall j\in U_i,\:i=1,\ldots,N-1\\
&&\mbox{with }\vert\alpha\vert+\max[{\rm deg}q_i, {\rm deg}q_j]\leq 2k.
\end{array}\end{equation}
\begin{theorem}
\label{th-sparse}
Let $\K\subset\R^n$ in (\ref{setk}) be compact.
Let the sparsity pattern $(I_i)_{i=1}^N$ satisfy the running intersection property,
and consider the hierarchy of semidefinite relaxations defined in (\ref{primal-sparse}). Then:

{\rm (a)}  $f^*_k\uparrow f^*$ as $k\to\infty$.

{\rm (b)} If an optimal solution $\y^*$ of (\ref{primal-sparse}) satisfies
\[{\rm rank}\,\M_k(\y^*,I_i)\,=\,{\rm rank}\,\M_{k-v_i}(\y^*,I_i)\,=:\,R_i,\qquad \forall i=1,\ldots,N,\]
(where $v_i=\max_{j\in J_i}[r_j]$), and
\[{\rm rank}\,\M_k(\y^*,I_i\cap I_j)\,=\,1,\qquad \forall j\in U_i,\:=1,\ldots,N-1,\]
then $f^*_k=f^*$ and one may extract finitely many global minimizers.
\end{theorem}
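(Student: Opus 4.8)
The plan is to follow the scheme of the proof of Theorem~\ref{th2} (the dense case), with two changes dictated by the sparse structure. Putinar's representation theorem will be applied \emph{blockwise}: each $\K_i$ is compact and the quadratic module generated by $\{g_j:j\in J_i\}$ is Archimedean, since it contains the ball constraint $g_{m+i}=M-\sum_{k\in I_i}x_k^2$. The blockwise measures so produced live on the $\K_i$, not on $\K$, hence must be welded together, and it is here that the running intersection property enters, through the gluing lemma \cite[Lemma B.13]{book}. Finally, the equality $\hat f=f^*$ for the generalized moment problem (\ref{lp1}) identifies the limit of the hierarchy. Throughout, $\y_i$ denotes the moment sequence attached to the $i$-th fraction, as in (\ref{hierarchy1}).

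For part~(a), I would first record the trivial half: $f^*_k$ is non-decreasing in $k$ (increasing $k$ only adds constraints) and $f^*_k\le f^*$ for every $k$. For the latter, $\sum_i f_i$ attains its minimum $f^*$ at some $\bx^*\in\K$ ($\K$ compact, each $f_i$ continuous there), and the truncated moment sequences of the Diracs $\mu_i:=q_i(\bx^*)^{-1}\delta_{(x^*_k,\,k\in I_i)}$ on $\K_i$ are feasible for (\ref{primal-sparse}) with value $\sum_i p_i(\bx^*)/q_i(\bx^*)=f^*$. Hence $f^*_k\uparrow\bar f$ for some $\bar f\le f^*$, and it remains to prove $\bar f\ge f^*$. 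For each $k$ pick a feasible $(\y_i^k)$ with $\sum_i L_{\y_i^k}(p_i)\le f^*_k+1/k$. The localising constraints for $g_{m+i}$ bound the diagonal entries, hence (by positive semidefiniteness) all entries, of the block moment matrices $M_k(\y_i^k,I_i)$ uniformly in $k$; a diagonal extraction gives a subsequence along which every coordinate $y^k_{i\balpha}$ converges, to some $y_{i\balpha}$. Pointwise convergence preserves every semidefinite and every linear constraint at each fixed order, so for each $i$ the limit satisfies $M_\ell(\y_i,I_i)\succeq0$ and $M_\ell(g_j\,\y_i,I_i)\succeq0$ for all $j\in J_i$ and all $\ell$; Putinar's theorem applied on $\K_i$ then gives a representing measure $\mu_i$ on $\K_i$. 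Passing the equalities to the limit yields $\int_{\K_i}q_i\,d\mu_i=1$ and $\pi_{ij}(q_i\,d\mu_i)=\pi_{ji}(q_j\,d\mu_j)$ for all overlapping pairs, so $(\mu_i)$ is feasible for (\ref{lp1}); therefore $\bar f=\lim_k\sum_i L_{\y_i^k}(p_i)=\sum_i\int_{\K_i}p_i\,d\mu_i\ge\hat f=f^*$, and with $f^*_k\le f^*$ this proves (a).

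For part~(b), write $\y_i^*$ for the $i$-th block of the optimal solution $\y^*$, and invoke the flat extension theorem of Curto and Fialkow \cite[Theorem 3.7]{book} once per block: for every $i$, ${\rm rank}\,M_k(\y_i^*,I_i)={\rm rank}\,M_{k-v_i}(\y_i^*,I_i)=R_i$ together with the localising constraints for $\{g_j:j\in J_i\}$ produces a unique representing measure $\mu_i^*$ on $\K_i$, finitely atomic with exactly $R_i$ atoms. Let $\nu_i^*$ be the measure with density $q_i$ with respect to $\mu_i^*$, a probability measure on $\K_i$ (since $L_{\y_i^*}(q_i)=1$) with the same atoms. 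The rank-one hypothesis on the overlap moment matrices forces each $\pi_{ij}\mu_i^*$, hence $\pi_{ij}\nu_i^*$, to be a single atom, and the coupling equalities give $\pi_{ij}\nu_i^*=\pi_{ji}\nu_j^*$; so the $\nu_i^*$ are pairwise consistent on overlaps and, by the running intersection property, glue (as in \cite[Lemma B.13]{book}) to a finitely atomic probability measure $\nu^*$ on $\K$ with $\pi_i\nu^*=\nu_i^*$. Then, exactly as in the proof of $\hat f=f^*$,
\[f^*_k=\sum_{i=1}^N\int_{\K_i}p_i\,d\mu_i^*=\sum_{i=1}^N\int_{\K_i}\frac{p_i}{q_i}\,d\nu_i^*=\int_\K\left(\sum_{i=1}^N\frac{p_i}{q_i}\right)d\nu^*\ge f^*,\]
and since $f^*_k\le f^*$ by (a) all the inequalities are equalities: $f^*_k=f^*$, $\nu^*$ is supported on the set of global minimizers of (\ref{defpb}), and reading off its finitely many atoms extracts global minimizers.

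The main obstacle is the reassembly step, which occurs in both parts: the relaxation produces only block-local objects on the $\K_i$ whose marginals of the measures $q_i\,d\mu_i$ (not of the $\mu_i$ themselves) are consistent on overlaps, and these must be glued into a single probability measure on $\K$. The running intersection property is precisely the hypothesis that makes the iterative gluing of \cite[Lemma B.13]{book} go through, and the care needed is to carry the tilting factor $q_i$ along so that the glued object is genuinely a probability measure on $\K$ and the objective identity used in the reformulation $\hat f=f^*$ still applies. In part~(b) there is the extra point that the rank-one overlap hypothesis is exactly what guarantees that the atomic gluing yields a globally consistent family of atoms.
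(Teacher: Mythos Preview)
Your proposal is correct and follows essentially the same route as the paper: nearly optimal solutions, diagonal extraction (with boundedness coming from the ball constraints $g_{m+i}$), blockwise application of Putinar's theorem on each $\K_i$, formation of the probability measures $\nu_i=q_i\,d\mu_i$, and gluing via \cite[Lemma B.13]{book} under the running intersection property to reach $f^*$; part~(b) likewise rests on the Curto--Fialkow flat extension applied per block together with the rank-one overlap condition. If anything, your write-up is more explicit than the paper's (you spell out the easy direction $f^*_k\le f^*$ and the role of the rank-one overlaps in the atomic gluing), but the underlying argument is the same.
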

\begin{proof}
The proof is similar to that of Theorem \ref{th2} and also
to that of \cite[Theorem 4.7]{book}.
One first prove that if $(\y^k)$ is a nearly optimal solution
of (\ref{hierarchy1}), i.e.
\[f^*_k\leq \sum_{i=1}^N L_{\y^k}(p_i)\leq f^*_k+\frac{1}{k},\]
then there exists a subsequence $(k_\ell)$ and a sequence $\y$, such that
\[\lim_{\ell\to\infty}\,y^{k_\ell}_{\balpha}\,=\,y_{\balpha},\quad \forall\balpha\in\N^{(i)}, i=1,\ldots,N.\]
From this pointwise convergence it easily follows that for every $i=1,\ldots,N$ and $j\in J_i$,
\[M_k(\y,I_i)\succeq0,\:M_{k}(g_j\,\y,I_i)\succeq0,\quad j\in J_i;\,i=1,\ldots,N.\]
Now observe that  each set $\K_i\subset\R^{n_i}$  satisfies Assumption \ref{ass1}. Therefore,
by Putinar's theorem \cite[Theorem 2.14]{book} the sequence
$\y^i=(y_{\balpha})$, $\alpha\in\N^{(i)}$ (a subsequence of $\y$), has a representing measure $\mu_i$ supported on $\K_i$.
For every $(i,j)$ with $j\in U_i$, denote by $\y^{ij}$ the sequence $(y_{\balpha})$, $\alpha\in\N^{(ij)}$.
Again, by pointwise convergence, $L_\y(q_i)=1$, $i=1,\ldots,N$, and
\begin{equation}
\label{equalmoments-new}
L_{\y}(q_i\bx^{\balpha})\,=\,\int_{\K_i}\bx^{\balpha}q_j(\bx)d\mu_j\,=\,
\int_{\K_j}\bx^{\balpha}q_j(\bx)d\mu_j,\quad\forall\balpha\in\N^{(ij)},\:\forall j\in U_i.
\end{equation}
Therefore, for every $i=1,\ldots,N$, $d\nu_i:=q_i(\bx)d\mu_i$
is a finite Borel probability measure supported on $\K_i$. As measures on compact sets are moment determinate, 
(\ref{equalmoments-new}) yields:
\[\pi_{ij}\nu_i\,=\,\pi_{ji}\nu_j,\qquad \forall (i,j),\,j\in U_i. \]
Therefore, by \cite[Lemma B.13]{book} there exists a probability measure $\nu$
on $\K$ such that $\pi_i\nu=\nu_i$ for every $i=1,\ldots,N$.	
But then 
\begin{eqnarray*}
f^*\geq\lim_{\ell\to\infty}f^*_{k_\ell}&=&
\lim_{\ell\to\infty}\sum_{i=1}^NL_{\y^{k_\ell}_i}(p_i)
=\sum_{i=1}^N\int_\K p_i d\mu_i\\
&=&\sum_{i=1}^N\int_\K\frac{p_i}{q_i}q_i d\mu_i
=\sum_{i=1}^N\int_\K\frac{p_i}{q_i}d\nu_i\\
&=&\int_\K\left(\sum_{i=1}^N\frac{p_i}{q_i}\right)d\nu\geq f^*.
\end{eqnarray*}
As the converging subsequence was arbitrary, and $(f^*_k)$ is monotone non decreasing,
we finally get $f^*_k\uparrow f^*$. In addition,
$\nu$ is an optimal solution of (\ref{lp1}) with optimal value $f^*=\hat{f}$.

The proof of (b) is as in \cite{sparse}
and uses the flat extension theorem of Curto and Fialkow
\cite[Theorem 3.7]{book} from which, the sequence $\y^*_i=(y^*_\alpha)$, $\alpha\in\N^{(i)}$, has an atomic representing measure
supported on $R_i$ points of $\K_i$, for every $i=1,\ldots,N$.
\end{proof}

\section{GloptiPoly and examples}\label{gmp}

In this section we show that the generalized moment problem (GMP)
formulation of rational optimization problem (\ref{defpb})
has a straightforward Matlab implementation when using our
software GloptiPoly 3 \cite{g3}. Rather than explaining the
approach in full generality with awkward notations, we describe
three simple examples. 

\subsection{Wilkinson-like rational function}\label{wilkinson}

Consider the elementary univariate rational optimization problem 
\[
f^* = \sup_{x \in \R} f(x), \quad f(x) =  \sum_{i=1}^N \frac{p_i(x)}{q_i(x)} 
= \sum_{i=1}^N \frac{1}{x^2+i}
\]
with $N$ an integer.
The only real critical point is $x=0$, at which the objective
function takes its maximum
\[
f^* = f(0) = \sum_{i=1}^N \frac{1}{i}.
\]
Reducing to the same denominator
\[
f(x) = \frac{\sum_i\prod_{j\neq i}(x^2+j)}{\prod_i(x^2+i)} = \frac{p(x^2)}{q(x^2)}
\]
yields the well-known Wilkinson polynomial $q$ whose squared root moduli
are the integers from $1$ to $N$. This polynomial was described in the mid 1960s
by James H. Wilkinson to illustrate the difficulty of finding numerically
the roots of polynomials. If we choose e.g. $N=20$, reduction
to the same denominator is hopeless since the constant coefficient in monic
polynomial $q$
is $20!=2432902008176640000$. The GMP formulation (\ref{lp0}) of this problem
reads (up to replacing $\inf$ with $\sup$ in the objective function):
\[
\begin{array}{ll}
\sup_{\mu_i \in {\mathcal M}(\R)} & \sum_{i=1}^N \int_\R p_i d\mu_i \\
\mathrm{s.t.} & \int_\R q_1 d\mu_1 = 1 \\
& \int_\R x^{\alpha} q_i d\mu_i = \int_\R x^{\alpha} q_1 d\mu_1, \quad \forall \alpha \in {\mathbb N}^n, \: i=1,\ldots,N.
\end{array}
\]
Our Matlab script to model and solve this problem is as follows:
\begin{verbatim}
N = 20; mpol('x',N); % create variables
q = cell(N,1); % problem data
mu = cell(N,1); % measures
for i = 1:N, q{i} = i+x(i)^2; mu{i} = meas(x(i)); end
% model GMP
k = 0; % relaxation order
f = mass(mu{1}); % objective function
e = [mom(q{1}) == 1]; % moment contraints
for i = 2:N
 f = f + mass(mu{i});
 e = [e; mom(mmon(x(1),k)*q{1}) == mom(mmon(x(i),k)*q{i})];
end
% model SDP relaxation of GMP
P = msdp(max(f),e);
% solve SDP relaxation
[stat,obj] = msol(P)
\end{verbatim}
Instructions {\tt mpol}, {\tt meas},
{\tt mass}, {\tt mom}, {\tt mmon}, {\tt msdp}, {\tt max}
and {\tt msol} are GloptiPoly 3 commands, see the user's
guide \cite{g3} for more information. For readers who are not
familiar with this package, variable {\tt f}
is the objective function to be maximized. Since $p_i=1$ for all $i=1,\ldots,N$,
it is the sum of masses of measures $\mu_i$.
Vector {\tt e} stores the linear moment constraints and
the instruction {\tt mmon(x,k)} generates all monomials
of variable {\tt x} up to degree {\tt k}.
Finally, instruction {\tt msdp} generates the SDP relaxation
of the GMP, and {\tt msol} solves the SDP problem with
the default conic solver (SeDuMi 1.3 in our case).

At the first SDP relaxation (i.e. for {\tt k=0})
we obtain a rank-one moment matrix corresponding to a Dirac at $x^*=0$:
\begin{verbatim}
>> [stat,obj] = msol(P)
Global optimality certified numerically
stat =
     1
obj =
    3.5977
\end{verbatim}
which is consistent with Maple's
\begin{verbatim}
> f := sum(1/(x^2+i), i=1..20);
> evalf(subs(x = 0, f));
    3.5977
\end{verbatim}

Note that for this example Assumption \ref{ass1} is violated,
since we optimize over the non-bounded set $\K = {\mathbb R}$.
In spite of this, we could solve the problem globally.

\subsection{Relevance of the compactness assumption}\label{sec-bounds}

With this elementary example we would like to emphasize the
practical relevance of Assumption \ref{ass1} on the existence of an algebraic
certificate of compactness of set $\K$.
Consider the univariate problem
\begin{equation}\label{pb-bounds}
f^* = \inf_{x \in \K} f(x), \quad f(x) = \frac{1+x+x^2}{1+x^2} + \frac{1+x^2}{1+2x^2}.
\end{equation}
First let $\K = {\mathbb R}$.
The numerator of the gradient of $f(x)$ has two real roots,
one of which being the global minimum located at $x^*=-1.4215$ for which
$f^*=1.1286$. The following GloptiPoly script models and solves the SDP
relaxations of orders $k=0,\ldots,9$ of the GMP formulation of this problem:
\begin{verbatim}
mpol x1 x2
f1 = 1+x1+x1^2; g1 = 1+x1^2; f2 = 1+x2^2; g2 = 1+2*x2^2;
mu1 = meas(x1); mu2 = meas(x2);
bounds = [];
for k = 0:9
 P = msdp(min(mom(f1)+mom(f2)), ...
     mom(mmon(x1,k)*g1) == mom(mmon(x2,k)*g2), mom(g1) == 1);
 [stat, obj] = msol(P);
 bounds = [bounds; obj];
end
bounds
\end{verbatim}
In vector {\tt bounds} we retrieve the following monotically increasing
sequence of lower bounds $f^*_k$ (up to 5 digits)
obtained by solving the SDP relaxations (\ref{hierarchy1}):
\begin{table}[h!]
\begin{center}
\begin{tabular}{c|c||c|c}
order $k$ & bound $f^*_k$ & order $k$ & bound $f^*_k$\\ \hline
0 & 1.0000 & 5 & 1.0793\\
1 & 1.0000 & 6 & 1.1264\\
2 & 1.0170 & 7 & 1.1283\\
3 & 1.0220 & 8 & 1.1286\\
4 & 1.0633 & 9 & 1.1286\\
\end{tabular}
\caption{Lower bounds for SDP relaxations of problem (\ref{pb-bounds}).\label{ex-bounds}}
\end{center}
\end{table}
At SDP relaxation $k=9$, GloptiPoly certifies global optimality and
extracts the global minimizer.
Table \ref{ex-bounds} shows that the convergence of the hierarchy of
SDP relaxations is rather slow for this very simple example.
This is due to the fact that Assumption \ref{ass1}
is violated, since we optimize over the non-bounded set $\K = {\mathbb R}$.
\begin{figure}[h!]
\begin{center}
\includegraphics[width=0.7\textwidth]{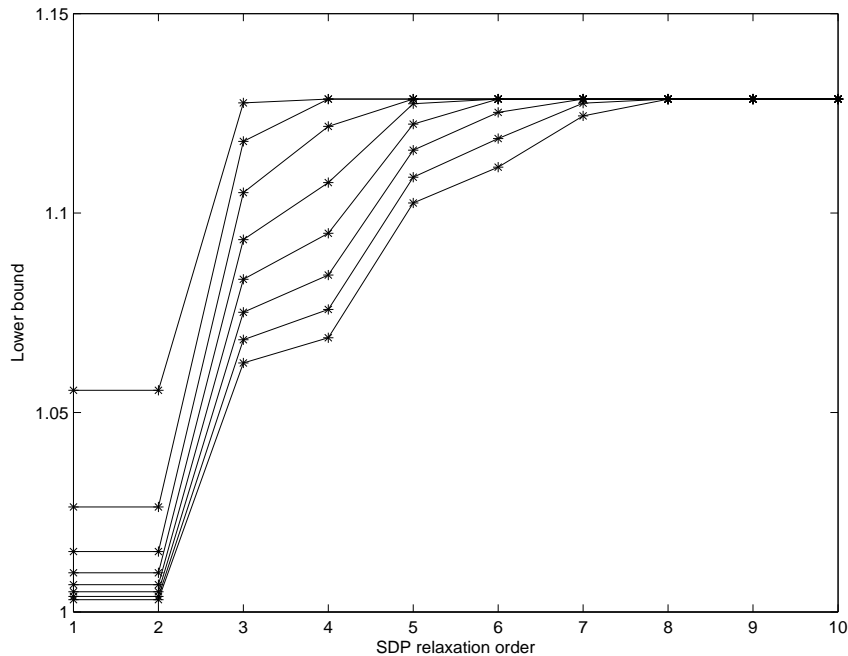}
\caption{Lower bounds for SDP relaxations of problem (\ref{pb-bounds})
on bounded sets $K=[-R,\: R]$ for $R=2$ (top curve) to $R=9$ (bottom curve).
\label{fig-bounds}}
\end{center}
\end{figure}

On Figure \ref{fig-bounds} we report the sequences of lower bounds
obtained by solving the SDP relaxations of problem (\ref{pb-bounds})
on compact sets $\K = [-R, \: R]$ for $R=2,3,\ldots,9$.

\subsection{Exploiting sparsity with GloptiPoly}\label{sparse_ex}

Even though version 3 of GloptiPoly is designed to exploit problem sparsity,
there is no illustration of this feature in the software user's guide \cite{g3}.
In this section we provide such a simple example. Note also that GloptiPoly is not
able to detect sparsity in a given problem, contrary to SparsePOP which uses
a heuristic to find chordal extensions of graphs \cite{sparsepop}.
However, SparsePOP is not designed to handle directly rational optimization problems.

Consider the elementary example of \cite[Section 3.2]{sparse}:
\[
\begin{array}{ll}
\inf_{\bx \in \R^4} & x_1x_2+x_1x_3+x_1x_4 \\
\mathrm{s.t.} & x^2_1+x^2_2 \leq 1 \\
& x^2_1+x^2_3 \leq 2 \\
& x^2_1+x^2_4 \leq 3
\end{array}
\]
for which the variable index subsets $I_1=\{1,2\}$, $I_2=\{1,3\}$, $I_3=\{1,4\}$
satisfy the running intersection property of Definition \ref{rip}.
Note that this problem is a particular case of (\ref{defpb})
with a polynomial objective function.

Without exploiting sparsity, the GloptiPoly script to solve
this problem is as follows:
\begin{verbatim}
mpol x1 x2 x3 x4
Pdense = msdp(min(x1*x2+x1*x3+x1*x4), ...
         x1^2+x2^2<=1,x1^2+x3^2<=2,x1^2+x4^2<=3,2);
[stat,obj] = msol(Pdense);
\end{verbatim}
GloptiPoly certifies global optimality
with a moment matrix of size 15, and 3 localizing matrices of size 5.
And here is the script exploiting sparsity, splitting the variables
into several measures $\mu_i$ consistently with subsets $I_i$:
\begin{verbatim}
mpol x1 3
mpol x2 x3 x4
mu(1) = meas([x1(1) x2]); % first measure on x1 and x2
mu(2) = meas([x1(2) x3]); % second measure on x1 and x3
mu(3) = meas([x1(3) x4]); % third measure on x1 and x4
f = mom(x1(1)*x2)+mom(x1(2)*x3)+mom(x1(3)*x4); % objective function
k = 3; % SDP relaxation order
m1 = mom(mmon(x1(1),k)); % moments of first measure
m2 = mom(mmon(x1(2),k)); % moments of second measure
m3 = mom(mmon(x1(3),k)); % moments of third measure
K = [x1(1)^2+x2^2<=1, x1(2)^2+x3^2<=2, x1(3)^2+x4^2<=3]; % supports
Psparse = msdp(min(f),m1==m2,m3==m2,K,mass(mu)==1);
[stat,obj] = msol(Psparse);
\end{verbatim}
GloptiPoly certifies global optimality
with 3 moment matrices of size 6, and 3 localizing matrices of size 3.

\subsection{Comparison with the epigraph approach}

In most of the examples we have processed, the epigraph approach
described in the Introduction (consisting of introducing one lifting
variable for each rational term in the objective function) was
less efficient than the GMP approach. Typically, the order
of the SDP relaxation (and hence its size) required to certify
global optimality is typically larger with the epigraph approach.

When evaluating the epigraph approach,
we also observed that it is numerically preferable to replace
the inequality constraints $r_i q_i(\bx) - p_i(\bx) \geq 0$
with equality constraints $r_i q_i(\bx) - p_i(\bx) = 0$
in the definition of semi-algebraic set $\hat{\K}$ in
(\ref{defpb1}). For the example of Section \ref{wilkinson}
the epigraph approach with inequalities certifies global optimality
at order $k=5$, whereas the epigraph approach with equalities
requires $k=1$.

As a typical illustration of the issues faced with the epigraph approach
consider the example with eighth-degree terms
\begin{equation}\label{rat_ex}
\inf_{\bx \in \R^2} f(x) = \sum_{i=1}^{10}
\frac{(x_1+x_2)(x^2_1+x^2_1x^2_2+x^4_2+i^2)-(ix^2_2+1)(x^4_1+x^2_2+2i)}
{(x^4_1+x^2_2+2i)(x^2_1+x^2_1x^2_2+x^4_2+i^2)}
\end{equation}
which is cooked up to have several local optima and sufficiently high
degree to prevent reduction to the same denominator.
%see Figure \ref{rat_fig}.
%\begin{figure}[h!]
%\begin{center}
%\includegraphics{rat}
%\caption{Rational objective function of problem (\ref{rat_ex}) with
%several local optima.\label{rat_fig}}
%\end{center}
%\end{figure}
After a suitable scaling to make critical points fit within the
box $[-1,1]^2$, as required by the moment SDP relaxations formulated
in the power basis \cite[Section 6.5]{g2},
the GMP approach yields a certificate of global optimality
with $x^*_1=-0.60450$, $x^*_2=-2.2045$, $f^*=-6.2844$
at order $k=6$ in a few seconds on a standard PC.
In contrast, the epigraph approach does not provide a certificate
for an order as high as $k=10$, requiring more than one minute of
CPU time.

\subsection{Shekel's foxholes}

Consider the modified Shekel foxholes rational function minimization
problem \cite{bersini}
\begin{equation}\label{shekel}
\min_{x \in {\mathbb R}^n} \sum_{i=1}^N \frac{1}{\sum_{j=1}^n (x_j-a_{ij})^2+c_i}
\end{equation}
whose data $a_{ij}$, $c_i$, $i=1,\ldots,N$, $j=1,\ldots,n$
can be found in \cite[Table 16]{ali}. This function is designed to have many
local minima, and we know the global minimum in the case $n=5$, see
\cite[Table 17]{ali}.
After a suitable scaling to make critical points fit within the
box $[0,1]^5$, and after addition of a Euclidean ball constraint
centered in the box,
the GMP approach yields a certificate
of global optimality at order $k=3$ in less than one minute on a
standard PC. The extracted minimizer is $x^*_1=8.0254$, $x^*_2=9.1483$,
$x^*_3=5.1138$, $x^*_4=7.6213$, $x^*_5=4.5638$, which matches with
the known global minimizer to four significant digits. This point
can be refined if given an initial guess for a local optimization
method. If we use a standard quasi-Newton BFGS algorithm, we obtain
after a few iterations a point matching the known global
minimizer to eight significant digits.

In the case $n=10$, for which the global minimum is given in
\cite[Table 17]{ali}, the GMP approach yields a certificate
of global optimality at order $k=2$ in about 750 seconds of
CPU time. Here too, we observe that the extracted minimizer
$x^*_1=8.0249$, $x^*_2=9.1518$, $x^*_3=5.1140$, $x^*_4=7.6209$,
$x^*_5=4.5640$, $x^*_6=4.7110$, $x^*_7=2.996$, $x^*_8=6.1259$,
$x^*_9=0.73424$, $x^*_10=4.9820$ is a good approximation
to the minimizer, with four correct significant digits.
If necessary, this point can be used as an initial guess
for refining with a local solver.

Note that it is not possible to exploit problem sparsity
in this case, since all the variables appear in each
term in sum (\ref{shekel}).

\subsection{Rosenbrock's function}

Consider the rational optimization problem
\begin{equation}\label{rosenbrock}
f^* =  \max_{x \in {\mathbb R}^n} \sum_{i=1}^{n-1} \frac{1}{100(x_{i+1}-x^2_i)^2+(x_i-1)^2+1}
\end{equation}
which has the same critical points as the well-known Rosenbrock problem
\[
\min_{x \in {\mathbb R}^n} \sum_{i=1}^{n-1} (100(x_{i+1}-x^2_i)^2+(x_i-1)^2)
\]
whose geometry is troublesome for local optimization solvers.
It can been easily shown that the global maximum $f^*=1$
of problem (\ref{rosenbrock}) is achieved at $x^*_i = 1$, $i=1,\ldots,n$.
Our experiments with local optimization algorithms reveal
that standard quasi-Newton solvers or functions of the Optimization
toolbox for Matlab, called repeatedly with random initial guesses,
typically yield local maxima quite far from the global maximum.

With our GMP approach, after exploiting sparsity and
adding bound constraints $x^2_i \leq 16$,
$i=1,\ldots,n$, we could solve problem (\ref{rosenbrock})
with a certificate of global optimality for $n$ up to $1000$.
Typical CPU times range
from 10 seconds for $n=100$ to 500 seconds for $n=1000$.

\section{Conclusion}

The problem of minimizing the sum of {\it many} low-degree (typically non-convex) 
rational fractions on a (typically non-convex) semi-algebraic set
arises in several important applications, and notably in 
computer vision (triangulation, estimation of the fundamental matrix
in epipolar geometry) and in systems control ($H_2$ optimal control
with a fixed-order controller of a linear system subject to parametric uncertainty).
These engineering problems motivated our work, but the application of our techniques
to computer vision and systems control will be described elsewhere.
These fractional programming problems being non convex, local optimization approaches 
yield only upper bounds on the optimum.

In this paper we were interested in computing the global minimum (and possibly 
global minimizers) or at least, computing valid lower bounds on the global minimum, for fractional programs 
involving a sum with many terms.
We have used a semidefinite programming (SDP) relaxation approach by formulating
the rational optimization problem as an instance of the generalized moment problem (GMP).
In addition, problem structure can be sometimes exploited in the case
where the number of variables is large but sparsity is present.
Numerical experiments with our public-domain software GloptiPoly interfaced
with off-the-shelf semidefinite programming solvers indicate that the approach can solve
problems that can be challenging for state-of-the-art
global optimization algorithms. This is consistent with the experiments
made in \cite{iccv} where the (dense) SDP relaxation approach was
first applied to (polynomial) optimization problems of computer vision.

For larger and/or ill-conditioned problems, it can happen that GloptiPoly
extracts from the moment matrix a minimizer which is not very accurate.
It can also happen that GloptiPoly is not able to extract a minimizer,
in which case first-order moments approximate the minimizer (provided
it is unique, which is generically true for rational optimization).
The approximate minimizer can be then input to any local optimization
algorithm as an initial guess.

A comparison of our approach with other techniques of global optimization
(reported e.g. on Hans Mittelmann's or Arnold Neumaier's webpages)
is out of the scope of this paper.
We believe however that such a comparison would be fair only if no expert
tuning is required for alternative algorithms. Indeed, when using
GloptiPoly the only assumption we make is that we know a ball
containing the global optimizer. Besides this, our results are fully
reproducible (Matlab files reproducing our examples are available
upon request) and our SDP relaxations are solved with general-purpose
semidefinite programming solvers.

\section*{Acknowledgments}

We are grateful to Michel Devy, Jean-Jos\'e Orteu, Tom\'a\v s Pajdla,
Thierry Sentenac and Rekha Thomas for insightful discussions
on applications of real algebraic geometry and SDP in computer vision,
and to Josh Taylor for his feedback on the example of section \ref{sparse_ex}.

\end{document}